\newtheorem{theorem}{Theorem}[section]
\theoremstyle{definition}
\newtheorem{example}[theorem]{Example}
\theoremstyle{remark}
\numberwithin{equation}{section}
\begin{document}

\title[]{A Sextuple Integral Containing the Product of Associated Legendre polynomials $P_v^u(x) P_{\nu }^{\mu }(y)$: Derivation and Evaluation}


\author{Robert Reynolds}
\address[Robert Reynolds]{Department of Mathematics and Statistics, York University, Toronto, ON, Canada, M3J1P3}
\email[Corresponding author]{milver@my.yorku.ca}
\thanks{}

\author{ Allan Stauffer}
\address[Allan Stauffer]{Department of Mathematics and Statistics, York University, Toronto, ON, Canada, M3J1P3}
\email{stauffer@yorku.ca}
\thanks{This research is supported by NSERC Canada under Grant 504070}

\subjclass[2020]{Primary  30E20, 33-01, 33-03, 33-04, 33-33B, 33E20}

\keywords{Sextuple Integral, Cauchy integral, Associated Legendre polynomials}

\date{}

\dedicatory{}

\begin{abstract}
In this present paper we derive a six dimensional integral containing the product of the Associated Legendre Polynomials $P_v^u(x) P_{\nu }^{\mu }(y)$ where the indices are different and general. Included in the kernel of this integral is the generalized logarithmic function and coefficient logarithmic functions. The derivation of this integral is written in terms of the Hurwitz-Lerch zeta function and constant coefficients raised to a power. Special cases of this integral are derived in terms of fundamental constants and other special functions. All the results in this work are new.
\end{abstract}

\maketitle
\section{Significance Statement}
Definite integrals involving Associate Legendre Polynomials and products of these polynomials have been published in the works \cite{salam,samaddar,wong,bailey}. Diekema and Koornwinder \cite{diekema} investigated the integral for orthogonal polynomials over [-1,1] with respect to a general even orthogonality measure, with Gegenbauer and Hermite polynomials as explicit special cases. In this current paper we will extend the work done by Diekema and Koornwinder by increasing the dimensions of the integral and expanding the integrand by deriving a sextuple integral of a kernel involving the product of generalized Associate Legendre Polynomials $P_v^u(x) P_{\nu }^{\mu }(y)$  and a generalized logarithmic function raised to a power. This sextuple integral is derived in terms of the Hurwitz-Lerch zeta function. The indices of the Legendre polynomials are different and admit complex values. We will also look at orthogonal properties of this integral with respect to a weight function. An example of the definition of orthogonality is given by Askey, in equation (2.12) in \cite{askey}.
\section{Introduction}
In this paper we derive the sextuple definite integral given by
\begin{multline}
\int_{0}^{1}\int_{0}^{1}\int_{0}^{1}\int_{0}^{1}\int_{0}^{1}\int_{0}^{1}x^{m-1} y^{-m} \left(1-x^2\right)^{-u/2} \left(1-y^2\right)^{-\mu /2}
   P_v^u(x) P_{\nu }^{\mu }(y)\\
    \log ^{\frac{1}{2} (-\mu -m-\nu
   )}\left(\frac{1}{p}\right) \log ^{\frac{1}{2} (-\mu -m+\nu
   +1)}\left(\frac{1}{q}\right) \log ^{\frac{1}{2}
   (m-u+v)}\left(\frac{1}{t}\right) \log ^{\frac{1}{2}
   (m-u-v-1)}\left(\frac{1}{z}\right)\\
    \log ^k\left(\frac{a x \sqrt{\log
   \left(\frac{1}{t}\right)} \sqrt{\log \left(\frac{1}{z}\right)}}{y \sqrt{\log
   \left(\frac{1}{p}\right)} \sqrt{\log
   \left(\frac{1}{q}\right)}}\right)dxdydzdtdpdq
\end{multline}
where the parameters $k,a,$ are general complex numbers and $Re(u)<Re(m)<1/2, Re(m)<Re(v)$. This definite integral will be used to derive special cases in terms of special functions and fundamental constants. The derivations follow the method used by us in~\cite{reyn4}. This method involves using a form of the generalized Cauchy's integral formula given by
\begin{equation}\label{intro:cauchy}
\frac{y^k}{\Gamma(k+1)}=\frac{1}{2\pi i}\int_{C}\frac{e^{wy}}{w^{k+1}}dw.
\end{equation}
where $C$ is in general an open contour in the complex plane where the bilinear concomitant has the same value at the end points of the contour. We then multiply both sides by a function of $x$, $y$, $z$, $t$, $p$ and $q$ then take a definite quadruple integral of both sides. This yields a definite integral in terms of a contour integral. Then we multiply both sides of Equation~(\ref{intro:cauchy})  by another function of $y$ and take the infinite sum of both sides such that the contour integral of both equations are the same.
\section{Definite Integral of the Contour Integral}
We use the method in~\cite{reyn4,reyn5}. The variable of integration in the contour integral is $r =  w+m$. The cut and contour are in the first quadrant of the complex $r$-plane.  The cut approaches the origin from the interior of the first quadrant and the contour goes round the origin with zero radius and is on opposite sides of the cut.  Using a generalization of Cauchy's integral formula we form the sextuple integral by replacing $y$ by 
\begin{equation*}
\log \left(\frac{a x \sqrt{\log \left(\frac{1}{t}\right)} \sqrt{\log
   \left(\frac{1}{z}\right)}}{y \sqrt{\log \left(\frac{1}{p}\right)} \sqrt{\log
   \left(\frac{1}{q}\right)}}\right)
   \end{equation*}
   and multiplying by
   \begin{multline*}
    x^{m-1} y^{-m} \left(1-x^2\right)^{-u/2} \left(1-y^2\right)^{-\mu /2} P_v^u(x)
   P_{\nu }^{\mu }(y)\\
    \log ^{\frac{1}{2} (-\mu -m-\nu
   )}\left(\frac{1}{p}\right) \log ^{\frac{1}{2} (-\mu -m+\nu
   +1)}\left(\frac{1}{q}\right) \log ^{\frac{1}{2}
   (m-u+v)}\left(\frac{1}{t}\right) \log ^{\frac{1}{2}
   (m-u-v-1)}\left(\frac{1}{z}\right)
   \end{multline*}
   then taking the definite integral with respect to $x\in  [0,1] $, $y\in  [0,1] $, $z\in  [0,1] $ $t\in  [0,1] $ $p\in  [0,1] $  and $q\in  [0,1] $ to obtain
\begin{multline}\label{dici}
\frac{1}{\Gamma(k+1)}\int_{0}^{1}\int_{0}^{1}\int_{0}^{1}\int_{0}^{1}\int_{0}^{1}\int_{0}^{1}x^{m-1} y^{-m} \left(1-x^2\right)^{-u/2} \left(1-y^2\right)^{-\mu /2}
   P_v^u(x) P_{\nu }^{\mu }(y)\\
    \log ^{\frac{1}{2} (-\mu -m-\nu
   )}\left(\frac{1}{p}\right) \log ^{\frac{1}{2} (-\mu -m+\nu
   +1)}\left(\frac{1}{q}\right) \log ^{\frac{1}{2}
   (m-u+v)}\left(\frac{1}{t}\right) \log ^{\frac{1}{2}
   (m-u-v-1)}\left(\frac{1}{z}\right)\\
    \log ^k\left(\frac{a x \sqrt{\log
   \left(\frac{1}{t}\right)} \sqrt{\log \left(\frac{1}{z}\right)}}{y \sqrt{\log
   \left(\frac{1}{p}\right)} \sqrt{\log
   \left(\frac{1}{q}\right)}}\right)dxdydzdtdpdq\\\\
   =\frac{1}{2\pi i}\int_{0}^{1}\int_{0}^{1}\int_{0}^{1}\int_{0}^{1}\int_{0}^{1}\int_{0}^{1}\int_{C}a^w w^{-k-1} \left(1-x^2\right)^{-u/2} \left(1-y^2\right)^{-\mu /2}
   x^{m+w-1} y^{-m-w} P_v^u(x) P_{\nu }^{\mu }(y)\\
    \log ^{\frac{1}{2} (-\mu -m-\nu -w)}\left(\frac{1}{p}\right) \log ^{\frac{1}{2} (-\mu -m+\nu
   -w+1)}\left(\frac{1}{q}\right)\\
    \log ^{\frac{1}{2}
   (m-u+v+w)}\left(\frac{1}{t}\right) \log ^{\frac{1}{2}
   (m-u-v+w-1)}\left(\frac{1}{z}\right)dwdxdydzdtdpdq\\\\
   =\frac{1}{2\pi i}\int_{C}\int_{0}^{1}\int_{0}^{1}\int_{0}^{1}\int_{0}^{1}\int_{0}^{1}\int_{0}^{1}a^w w^{-k-1} \left(1-x^2\right)^{-u/2} \left(1-y^2\right)^{-\mu /2}
   x^{m+w-1} y^{-m-w} P_v^u(x) P_{\nu }^{\mu }(y)\\
    \log ^{\frac{1}{2} (-\mu -m-\nu -w)}\left(\frac{1}{p}\right) \log ^{\frac{1}{2} (-\mu -m+\nu
   -w+1)}\left(\frac{1}{q}\right)\\
    \log ^{\frac{1}{2}
   (m-u+v+w)}\left(\frac{1}{t}\right) \log ^{\frac{1}{2}
   (m-u-v+w-1)}\left(\frac{1}{z}\right)dxdydzdtdpdqdw\\\\
   =\frac{1}{2\pi i}\int_{C}\pi ^2 a^w w^{-k-1} 2^{\mu +u-1} \csc
   (\pi  (m+w))dw
\end{multline}
from equation (18.1.5) in \cite{bateman} and equation (4.215.1) in \cite{grad} where $Re(m+w)>0, Re(u)<1, 0<Re(m)<1, Re(v)>0, Re(m)<|Re(v)|, Re(\mu)<1, Re(\nu)>0, Re(m)<|Re(\nu)|$ and using the reflection formula (8.334.3) in \cite{grad} for the Gamma function. We are able to switch the order of integration over $x$, $y$, $z$, $t$, $p$ and $q$ using Fubini's theorem for multiple integrals see (9.112) in \cite{harrison}, since the integrand is of bounded measure over the space $\mathbb{C} \times [0,1] \times [0,1] \times [0,1] \times [0,1] \times [0,1] \times [0,1]$.
\section{The Hurwitz-Lerch Zeta Function and Infinite Sum of the Contour Integral}
In this section we use Equation~(\ref{intro:cauchy}) to derive the contour integral representations for the Hurwitz-Lerch Zeta  function.
\subsection{The Hurwitz-Lerch Zeta Function}
The Hurwitz-Lerch Zeta function (25.14) in \cite{dlmf} has a series representation given~by
\begin{equation}\label{lerch:eq}
\Phi(z,s,v)=\sum_{n=0}^{\infty}(v+n)^{-s}z^{n}
\end{equation}
where $|z|<1, v \neq 0,-1,..$ and is continued analytically by its integral representation given~by
\begin{equation}\label{armenia:eq8}
\Phi(z,s,v)=\frac{1}{\Gamma(s)}\int_{0}^{\infty}\frac{t^{s-1}e^{-vt}}{1-ze^{-t}}dt=\frac{1}{\Gamma(s)}\int_{0}^{\infty}\frac{t^{s-1}e^{-(v-1)t}}{e^{t}-z}dt
\end{equation}
where $Re(v)>0$, and either $|z| \leq 1, z \neq 1, Re(s)>0$, or $z=1, Re(s)>1$. 
\subsection{Infinite sum of the Contour Integral}
Using equation (\ref{intro:cauchy}) and replacing $y$ by 
\begin{equation*}
\log (a)+i \pi  (2 y+1)
\end{equation*}
 then multiplying both sides by
\begin{equation*}
 -i \pi ^2 e^{i \pi  m (2 y+1)} 2^{\mu +u}
 \end{equation*}
  taking the infinite sum over $y\in[0,\infty)$ and simplifying in terms of the Hurwitz-Lerch Zeta function we obtain
\begin{dmath}\label{isci}
\frac{i^{k-1} \pi ^{k+2} e^{i \pi  m} 2^{k+\mu +u} \Phi \left(e^{2 i m
   \pi },-k,\frac{\pi -i \log (a)}{2 \pi }\right)}{\Gamma(k+1)}\\\\
   =-\frac{1}{2\pi i}\sum_{y=0}^{\infty}\int_{C}i \pi ^2 a^w w^{-k-1} 2^{\mu +u} e^{i \pi  (2 y+1)
   (m+w)}dw\\\\
    =-\frac{1}{2\pi i}\int_{C}\sum_{y=0}^{\infty}i \pi ^2 a^w w^{-k-1} 2^{\mu +u} e^{i \pi  (2 y+1)
   (m+w)}dw\\\\
   =\frac{1}{2\pi i}\int_{C}\pi ^2 a^w w^{-k-1}2^{\mu +u-1} \csc (\pi  (m+w))dw
\end{dmath}
from equation (1.232.3) in \cite{grad} where $Im(\pi  (m+w))>0$ in order for the sum to converge.
\section{Definite Integral in terms of the Hurwitz-Lerch Zeta Function}
\begin{theorem}
For all $k,a \in\mathbb{C}, Re(u)<1, 0<Re(m)<1, Re(v)>0, Re(m)<|Re(v)|, Re(\mu)<1, Re(\nu)>0, Re(m)<|Re(\nu)|$ then,
\begin{multline}\label{dilf}
\int_{0}^{1}\int_{0}^{1}\int_{0}^{1}\int_{0}^{1}\int_{0}^{1}\int_{0}^{1}x^{m-1} y^{-m} \left(1-x^2\right)^{-u/2} \left(1-y^2\right)^{-\mu /2}
   P_v^u(x) P_{\nu }^{\mu }(y)\\
    \log ^{\frac{1}{2} (-\mu -m-\nu
   )}\left(\frac{1}{p}\right) \log ^{\frac{1}{2} (-\mu -m+\nu
   +1)}\left(\frac{1}{q}\right) \log ^{\frac{1}{2}
   (m-u+v)}\left(\frac{1}{t}\right) \log ^{\frac{1}{2}
   (m-u-v-1)}\left(\frac{1}{z}\right)\\
    \log ^k\left(\frac{a x \sqrt{\log
   \left(\frac{1}{t}\right)} \sqrt{\log \left(\frac{1}{z}\right)}}{y \sqrt{\log
   \left(\frac{1}{p}\right)} \sqrt{\log
   \left(\frac{1}{q}\right)}}\right)dxdydzdtdpdq\\
   =i^{k-1} \pi ^{k+2} e^{i \pi  m} 2^{k+\mu
   +u} \Phi \left(e^{2 i m \pi },-k,\frac{\pi -i \log (a)}{2 \pi
   }\right)
\end{multline}
\end{theorem}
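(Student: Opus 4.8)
The plan is to prove (\ref{dilf}) by showing that each side, after an appropriate normalization, coincides with one and the same contour integral over the Hankel-type contour $C$ of Section 3, namely
\[
I_C:=\frac{1}{2\pi i}\int_{C}\pi^{2}a^{w}w^{-k-1}2^{\mu+u-1}\csc\!\left(\pi(m+w)\right)dw,
\]
and then equating the two ends of the resulting chain of identities.

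First I would establish the left-hand side. Starting from the generalized Cauchy formula (\ref{intro:cauchy}), replace $y$ by $\log\!\big(a x\sqrt{\log(1/t)}\sqrt{\log(1/z)}\big/\big(y\sqrt{\log(1/p)}\sqrt{\log(1/q)}\big)\big)$, multiply by the indicated product of algebraic factors, the powers of $\log(1/p),\log(1/q),\log(1/t),\log(1/z)$, and $P_v^u(x)P_\nu^\mu(y)$, and integrate over $[0,1]^6$. Fubini's theorem (as invoked in Section 3, valid on the parameter strip $Re(u)<1$, $0<Re(m)<1$, $Re(v)>0$, $Re(m)<|Re(v)|$, $Re(\mu)<1$, $Re(\nu)>0$, $Re(m)<|Re(\nu)|$) lets me pull the $w$-integration outside the six real integrations, which are then evaluated in closed form: the $x$- and $y$-integrals by the Legendre integral (18.1.5) of \cite{bateman}, the $p,q,t,z$-integrals by $\int_0^1\log^{s-1}(1/\tau)\,d\tau=\Gamma(s)$, equation (4.215.1) of \cite{grad}, and the product of the four resulting Gamma factors collapses to the cosecant via the reflection formula (8.334.3) of \cite{grad}. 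This is exactly the computation in (\ref{dici}); it shows that $\frac{1}{\Gamma(k+1)}$ times the left-hand side of (\ref{dilf}) equals $I_C$.

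Next I would establish the Hurwitz-Lerch side. Again starting from (\ref{intro:cauchy}), replace $y$ by $\log(a)+i\pi(2y+1)$, multiply by $-i\pi^{2}e^{i\pi m(2y+1)}2^{\mu+u}$, and sum over $y=0,1,2,\dots$; the geometric series in $e^{2i\pi(m+w)y}$ is summed under the integral sign using (1.232.3) of \cite{grad}, valid where $Im(\pi(m+w))>0$, which is precisely the side of the cut on which $C$ lies, and the summed exterior series is identified with a Hurwitz-Lerch zeta value via the series (\ref{lerch:eq}) and its continuation (\ref{armenia:eq8}). This is the computation in (\ref{isci}); it shows that
\[
\frac{i^{k-1}\pi^{k+2}e^{i\pi m}2^{k+\mu+u}}{\Gamma(k+1)}\,\Phi\!\left(e^{2im\pi},\,-k,\,\frac{\pi-i\log a}{2\pi}\right)=I_C.
\]

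Finally, since both normalized sides equal $I_C$, I equate them and multiply through by $\Gamma(k+1)$ to obtain (\ref{dilf}). The identity holds first on the parameter strip above and for those $k,a\in\mathbb{C}$ for which the series and contour representations converge absolutely; I would then extend it to all $k,a\in\mathbb{C}$ by analytic continuation, both sides being analytic in $k$ and $a$ (in particular $\Phi$ is entire in its second argument $-k$), with the hypothesis $0<Re(m)<1$ keeping $e^{2im\pi}$ away from the branch point $z=1$ of $\Phi$. \textbf{Main obstacle.} The genuinely delicate steps are the analytic justifications rather than the algebra: verifying the absolute/uniform convergence needed to interchange the $w$-contour integral with the six definite integrals and with the infinite sum over $y$, and checking that the strip on which the four auxiliary closed-form evaluations — the Legendre integral (18.1.5), the logarithmic-power integral (4.215.1), the Gamma reflection formula, and the geometric sum (1.232.3) — are simultaneously valid is nonempty before carrying out the final analytic continuation in $(k,a)$. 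Once these are settled, matching the two cosecant representations of $I_C$ is immediate.
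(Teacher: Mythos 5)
Your proposal is correct and follows essentially the same route as the paper: both sides are reduced, after dividing by $\Gamma(k+1)$, to the common contour integral $\frac{1}{2\pi i}\int_{C}\pi^{2}a^{w}w^{-k-1}2^{\mu+u-1}\csc(\pi(m+w))\,dw$ via (\ref{dici}) and (\ref{isci}), and the theorem follows by equating them and clearing the Gamma factor. Your added remarks on Fubini, the convergence strip, and analytic continuation simply make explicit the justifications the paper leaves implicit.
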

\begin{proof}
The right-hand sides of relations (\ref{dici}) and (\ref{isci}) are identical; hence, the left-hand sides of the same are identical too. Simplifying with the Gamma function $\Gamma(z)$ yields the desired conclusion.
\end{proof}
\begin{example}
The degenerate case.
\begin{multline}
\int_{0}^{1}\int_{0}^{1}\int_{0}^{1}\int_{0}^{1}\int_{0}^{1}\int_{0}^{1}x^{m-1} y^{-m} \left(1-x^2\right)^{-u/2} \left(1-y^2\right)^{-\mu /2}
   P_v^u(x) P_{\nu }^{\mu }(y)\\
    \log ^{\frac{1}{2} (-\mu -m-\nu
   )}\left(\frac{1}{p}\right) \log ^{\frac{1}{2} (-\mu -m+\nu
   +1)}\left(\frac{1}{q}\right) \log ^{\frac{1}{2}
   (m-u+v)}\left(\frac{1}{t}\right) \log ^{\frac{1}{2}
   (m-u-v-1)}\left(\frac{1}{z}\right)dxdydzdtdpdq\\\\
   =\pi ^2 \csc (\pi  m) 2^{\mu +u-1}
\end{multline}
\end{example}
\begin{proof}
Use equation (\ref{dilf}) and set $k=0$ and simplify using entry (2) in Table below (64:12:7) in \cite{atlas}.
\end{proof}
\begin{example}
The Hurwitz zeta function $\zeta(s,v)$.
\begin{multline}\label{eq:hurwitz}
\int_{0}^{1}\int_{0}^{1}\int_{0}^{1}\int_{0}^{1}\int_{0}^{1}\int_{0}^{1}\frac{1}{\sqrt{x} \sqrt{y}}\left(1-x^2\right)^{-u/2} \left(1-y^2\right)^{-\mu /2} P_v^u(x)
   P_{\nu }^{\mu }(y)\\
    \log ^{\frac{1}{2} \left(-\mu -\nu
   -\frac{1}{2}\right)}\left(\frac{1}{p}\right) \log ^{\frac{1}{2} \left(-\mu
   +\nu +\frac{1}{2}\right)}\left(\frac{1}{q}\right) \log ^{\frac{1}{2}
   \left(-u+v+\frac{1}{2}\right)}\left(\frac{1}{t}\right) \log ^{\frac{1}{2}
   \left(-u-v-\frac{1}{2}\right)}\left(\frac{1}{z}\right)\\
    \log ^k\left(\frac{a x
   \sqrt{\log \left(\frac{1}{t}\right)} \sqrt{\log \left(\frac{1}{z}\right)}}{y
   \sqrt{\log \left(\frac{1}{p}\right)} \sqrt{\log
   \left(\frac{1}{q}\right)}}\right)dxdydzdtdpdq\\\\
   =i^k \pi ^{k+2} 2^{k+\mu
   +u} \left(2^k \zeta \left(-k,\frac{\pi -i \log (a)}{4 \pi }\right)-2^k \zeta
   \left(-k,\frac{1}{2} \left(\frac{\pi -i \log (a)}{2 \pi
   }+1\right)\right)\right)
\end{multline}
\end{example}
\begin{proof}
Use equation (\ref{dilf}) and set $m=1/2$ and simplify using entry (4) in Table below (64:12:7) in \cite{atlas}.
\end{proof}
\begin{example}
The Harmonic number function $H_{n}$.
\begin{multline}
\int_{0}^{1}\int_{0}^{1}\int_{0}^{1}\int_{0}^{1}\int_{0}^{1}\int_{0}^{1}\frac{1}{\sqrt{x} \sqrt{y} \log \left(-\frac{2 x
   \sqrt{\log \left(\frac{1}{t}\right)} \sqrt{\log \left(\frac{1}{z}\right)}}{y
   \sqrt{\log \left(\frac{1}{p}\right)} \sqrt{\log
   \left(\frac{1}{q}\right)}}\right)}\left(1-x^2\right)^{-u/2} \left(1-y^2\right)^{-\mu /2} P_v^u(x)
   P_{\nu }^{\mu }(y)\\
    \log ^{\frac{1}{4} (-2 \mu -2 \nu
   -1)}\left(\frac{1}{p}\right) \log ^{\frac{1}{4} (-2 \mu +2 \nu
   +1)}\left(\frac{1}{q}\right) \log ^{\frac{1}{4} (-2 u+2
   v+1)}\left(\frac{1}{t}\right) \log ^{\frac{1}{4} (-2 u-2
   v-1)}\left(\frac{1}{z}\right)dxdydzdtdpdq\\\\
   =-i \pi  \left(H_{-\frac{i \log (2)}{4 \pi
   }}-H_{-\frac{1}{2}-\frac{i \log (2)}{4 \pi }}\right) 2^{\mu +u-2}
\end{multline}
\end{example}
\begin{proof}
Use equation (\ref{eq:hurwitz}) and set $a=-2$ and apply l'Hopital's rule as $k \to -1$ and simplify.
\end{proof}
\begin{example}
\begin{multline}\label{eq:bdh}
\int_{0}^{1}\int_{0}^{1}\int_{0}^{1}\int_{0}^{1}\int_{0}^{1}\int_{0}^{1}\frac{1}{x \log \left(\frac{x
   \sqrt{\log \left(\frac{1}{t}\right)} \sqrt{\log \left(\frac{1}{z}\right)}}{y
   \sqrt{\log \left(\frac{1}{p}\right)} \sqrt{\log
   \left(\frac{1}{q}\right)}}\right)}\left(1-x^2\right)^{-u/2} \left(1-y^2\right)^{-\mu /2} y^{-m-n}
   P_v^u(x) P_{\nu }^{\mu }(y)\\
    \log ^{\frac{v-u}{2}}\left(\frac{1}{t}\right)
   \log ^{\frac{1}{2} (-u-v-1)}\left(\frac{1}{z}\right) \log ^{\frac{1}{2} (-\mu
   -m-\nu -n)}\left(\frac{1}{p}\right) \log ^{\frac{1}{2} (-\mu -m+\nu
   -n+1)}\left(\frac{1}{q}\right)\\
    \left(y^m x^n \log
   ^{\frac{m}{2}}\left(\frac{1}{p}\right) \log
   ^{\frac{m}{2}}\left(\frac{1}{q}\right) \log
   ^{\frac{n}{2}}\left(\frac{1}{t}\right) \log
   ^{\frac{n}{2}}\left(\frac{1}{z}\right)-x^m y^n \log
   ^{\frac{m}{2}}\left(\frac{1}{t}\right) \log
   ^{\frac{m}{2}}\left(\frac{1}{z}\right) \log
   ^{\frac{n}{2}}\left(\frac{1}{p}\right) \log
   ^{\frac{n}{2}}\left(\frac{1}{q}\right)\right)\\
   dxdydzdtdpdq\\\\
   =\pi  2^{\mu +u} \left(\tanh
   ^{-1}\left(e^{i \pi  m}\right)-\tanh ^{-1}\left(e^{i \pi 
   n}\right)\right)
\end{multline}
\end{example}
\begin{proof}
Use equation (\ref{dilf}) and form a second equation by replacing $m \to n$ and taking their difference and simplify after setting $k=-1,a=1$ using entry (3) in Table below (64:12:7) in \cite{atlas}.
\end{proof}
\begin{example}
\begin{multline}
\int_{0}^{1}\int_{0}^{1}\int_{0}^{1}\int_{0}^{1}\int_{0}^{1}\int_{0}^{1}\frac{1}{x^{2/3} \sqrt{y} \log \left(\frac{x
   \sqrt{\log \left(\frac{1}{t}\right)} \sqrt{\log \left(\frac{1}{z}\right)}}{y
   \sqrt{\log \left(\frac{1}{p}\right)} \sqrt{\log
   \left(\frac{1}{q}\right)}}\right)}\left(1-x^2\right)^{-u/2} \left(1-y^2\right)^{-\mu /2} P_v^u(x)
   P_{\nu }^{\mu }(y)\\
    \log ^{\frac{1}{4} (-2 \mu -2 \nu
   -1)}\left(\frac{1}{p}\right) \log ^{\frac{1}{4} (-2 \mu +2 \nu
   +1)}\left(\frac{1}{q}\right) \log ^{\frac{1}{6} (-3 u+3
   v+1)}\left(\frac{1}{t}\right) \log ^{\frac{1}{6} (-3 u-3
   v-2)}\left(\frac{1}{z}\right)\\
    \left(\sqrt[6]{y} \sqrt[12]{\log
   \left(\frac{1}{p}\right)} \sqrt[12]{\log
   \left(\frac{1}{q}\right)}-\sqrt[6]{x} \sqrt[12]{\log
   \left(\frac{1}{t}\right)} \sqrt[12]{\log
   \left(\frac{1}{z}\right)}\right)dxdydzdtdpdq\\\\
   =\pi  \log (3) \left(-2^{\mu
   +u-2}\right)
\end{multline}
\end{example}
\begin{proof}
Use equation (\ref{eq:bdh}) and set $m=1/2,n=1/3$ and simplify. 
\end{proof}
\begin{example}
\begin{multline}
\int_{0}^{1}\int_{0}^{1}\int_{0}^{1}\int_{0}^{1}\int_{0}^{1}\int_{0}^{1}\frac{1}{x^{3/4} \sqrt{y} \log \left(\frac{x
   \sqrt{\log \left(\frac{1}{t}\right)} \sqrt{\log \left(\frac{1}{z}\right)}}{y
   \sqrt{\log \left(\frac{1}{p}\right)} \sqrt{\log
   \left(\frac{1}{q}\right)}}\right)}\left(1-x^2\right)^{-u/2} \left(1-y^2\right)^{-\mu /2} P_v^u(x)
   P_{\nu }^{\mu }(y)\\
    \log ^{\frac{1}{4} (-2 \mu -2 \nu
   -1)}\left(\frac{1}{p}\right) \log ^{\frac{1}{4} (-2 \mu +2 \nu
   +1)}\left(\frac{1}{q}\right) \log ^{\frac{1}{8} (-4 u+4
   v+1)}\left(\frac{1}{t}\right) \log ^{\frac{1}{8} (-4 u-4
   v-3)}\left(\frac{1}{z}\right)\\
    \left(\sqrt[4]{y} \sqrt[8]{\log
   \left(\frac{1}{p}\right)} \sqrt[8]{\log \left(\frac{1}{q}\right)}-\sqrt[4]{x}
   \sqrt[8]{\log \left(\frac{1}{t}\right)} \sqrt[8]{\log
   \left(\frac{1}{z}\right)}\right)dxdydzdtdpdq\\\\
   =\pi  \coth ^{-1}\left(\sqrt{2}\right)
   \left(-2^{\mu +u-1}\right)
\end{multline}
\end{example}
\begin{proof}
Use equation (\ref{eq:bdh}) and set $m=1/2,n=1/4$ and simplify. 
\end{proof}
\begin{example}
Alternate Hurwitz-Lerch zeta form $\Phi(e^{m\pi i},-k,a)$.
\begin{multline}\label{eq:lerch}
\int_{0}^{1}\int_{0}^{1}\int_{0}^{1}\int_{0}^{1}\int_{0}^{1}\int_{0}^{1}x^{\frac{m}{2}-1} y^{-m/2} \left(1-x^2\right)^{-u/2}
   \left(1-y^2\right)^{-\mu /2} P_v^u(x) P_{\nu }^{\mu }(y)\\
    \log ^{\frac{1}{4}
   (-2 (\mu +\nu )-m)}\left(\frac{1}{p}\right) \log ^{\frac{1}{4} (-2 \mu -m+2
   \nu +2)}\left(\frac{1}{q}\right) \log ^{\frac{1}{4} (m-2 u+2
   v)}\left(\frac{1}{t}\right) \log ^{\frac{1}{4} (m-2
   (u+v+1))}\left(\frac{1}{z}\right)\\
    \log ^k\left(-\frac{e^{2 i \pi  a} x
   \sqrt{\log \left(\frac{1}{t}\right)} \sqrt{\log \left(\frac{1}{z}\right)}}{y
   \sqrt{\log \left(\frac{1}{p}\right)} \sqrt{\log
   \left(\frac{1}{q}\right)}}\right)dxdydzdtdpdq\\\\
   =-i \pi ^{k+2} e^{\frac{1}{2} i \pi  (k+m)}
   2^{k+\mu +u} \Phi \left(e^{i m \pi },-k,a\right)
\end{multline}
\end{example}
\begin{proof}
Use equation (\ref{dilf}) and set $a=-e^{2 i \pi  a}, m=m/2$ and simplify. 
\end{proof}
\begin{example}
\begin{multline}\label{eq:zeta}
\int_{0}^{1}\int_{0}^{1}\int_{0}^{1}\int_{0}^{1}\int_{0}^{1}\int_{0}^{1}\frac{1}{\sqrt{x} \sqrt{y}}\left(1-x^2\right)^{-u/2} \left(1-y^2\right)^{-\mu /2} P_v^u(x)
   P_{\nu }^{\mu }(y)\\
    \log ^{\frac{1}{4} (-2 (\mu +\nu
   )-1)}\left(\frac{1}{p}\right) \log ^{\frac{1}{4} (-2 \mu +2 \nu
   +1)}\left(\frac{1}{q}\right) \log ^{\frac{1}{4} (-2 u+2
   v+1)}\left(\frac{1}{t}\right) \log ^{\frac{1}{4} (1-2
   (u+v+1))}\left(\frac{1}{z}\right)\\
    \log ^k\left(-\frac{x \sqrt{\log
   \left(\frac{1}{t}\right)} \sqrt{\log \left(\frac{1}{z}\right)}}{y \sqrt{\log
   \left(\frac{1}{p}\right)} \sqrt{\log
   \left(\frac{1}{q}\right)}}\right)dxdydzdtdpdq\\\\
   =\left(2^{k+1}-1\right)
   e^{\frac{i \pi  k}{2}} \pi ^{k+2} \zeta (-k) \left(-2^{k+\mu
   +u}\right)
\end{multline}
\end{example}
\begin{proof}
Use equation (\ref{eq:lerch}) and set $m=a=1$ and simplify using entry (4) in Table below (64:12:7) and entry (2) in Table below (64:7) in \cite{atlas}
\end{proof}
\begin{example}
The fundamental constant $\log(2)$.
\begin{multline}
\int_{0}^{1}\int_{0}^{1}\int_{0}^{1}\int_{0}^{1}\int_{0}^{1}\int_{0}^{1}\frac{1}{\sqrt{x} \sqrt{y} \log \left(-\frac{x
   \sqrt{\log \left(\frac{1}{t}\right)} \sqrt{\log \left(\frac{1}{z}\right)}}{y
   \sqrt{\log \left(\frac{1}{p}\right)} \sqrt{\log
   \left(\frac{1}{q}\right)}}\right)}\left(1-x^2\right)^{-u/2} \left(1-y^2\right)^{-\mu /2} P_v^u(x)
   P_{\nu }^{\mu }(y)\\
    \log ^{\frac{1}{4} (-2 (\mu +\nu
   )-1)}\left(\frac{1}{p}\right) \log ^{\frac{1}{4} (-2 \mu +2 \nu
   +1)}\left(\frac{1}{q}\right) \log ^{\frac{1}{4} (-2 u+2
   v+1)}\left(\frac{1}{t}\right) \log ^{\frac{1}{4} (1-2
   (u+v+1))}\left(\frac{1}{z}\right)dxdydzdtdpdq\\\\
   =-i \pi  \log (2) 2^{\mu +u-1}
\end{multline}
\end{example}
\begin{proof}
Use equation (\ref{eq:zeta}) apply l'Hopital's rule as $k\to -1$ and simplify. 
\end{proof}
\begin{example}
Ap\'{e}ry's constant $\zeta(3)$.
\begin{multline}
\int_{0}^{1}\int_{0}^{1}\int_{0}^{1}\int_{0}^{1}\int_{0}^{1}\int_{0}^{1}\frac{1}{\sqrt{x} \sqrt{y} \log ^3\left(-\frac{x
   \sqrt{\log \left(\frac{1}{t}\right)} \sqrt{\log \left(\frac{1}{z}\right)}}{y
   \sqrt{\log \left(\frac{1}{p}\right)} \sqrt{\log
   \left(\frac{1}{q}\right)}}\right)}\left(1-x^2\right)^{-u/2} \left(1-y^2\right)^{-\mu /2} P_v^u(x)
   P_{\nu }^{\mu }(y)\\
    \log ^{\frac{1}{4} (-2 (\mu +\nu
   )-1)}\left(\frac{1}{p}\right) \log ^{\frac{1}{4} (-2 \mu +2 \nu
   +1)}\left(\frac{1}{q}\right) \log ^{\frac{1}{4} (-2 u+2
   v+1)}\left(\frac{1}{t}\right) \log ^{\frac{1}{4} (1-2
   (u+v+1))}\left(\frac{1}{z}\right)dxdydzdtdpdq\\\\
   =\frac{3 i \zeta (3) 2^{\mu +u-5}}{\pi
   }
\end{multline}
\end{example}
\begin{proof}
Use equation (\ref{eq:zeta}) and set $k=-3$ and simplify. 
\end{proof}
\section{Discussion}
In this paper, we have presented a novel method for deriving a new sextuple integral transform involving the product of Associated Legendre polynomials $P_v^u(x) P_{\nu }^{\mu }(y)$ along with some interesting special cases, using contour integration. The results presented were numerically verified for both real and imaginary and complex values of the parameters in the integrals using Mathematica by Wolfram.

\begin{thebibliography}{999}
%
\bibitem{reyn4} Reynolds, R.; Stauffer, A.
{A Method for Evaluating Definite Integrals in Terms of Special Functions with Examples}.  \emph{Int. Math. Forum} \textbf{2020}, \emph{15}, 235--244, doi:10.12988/imf.2020.91272 
%
\bibitem{reyn5}Reynolds, Robert, and Allan Stauffer, 
\emph{Quadruple Integral Involving the Logarithm and Product of Bessel Functions Expressed in Terms of the Lerch Function}, Axioms 10, no. 4: 324. 2021, 
https://doi.org/10.3390/axioms10040324
%
\bibitem{grad} Gradshteyn, I.S.; Ryzhik, I.M.
\emph{Tables of Integrals, Series and Products}, 6th ed.; Academic Press: Cambridge, MA, USA, 
 2000.
%
%
\bibitem{atlas} Oldham, K.B.; Myland, J.C.; Spanier, J.
\emph{An Atlas of Functions: With Equator, the Atlas Function Calculator}, 2nd ed.; Springer: New York, NY, USA, 2009.
%
\bibitem{dlmf} Olver, F.W.J.; Lozier, D.W.; Boisvert, R.F.; Clark, C.W. (Eds.)
 \emph{NIST Digital Library of Mathematical Functions}; U.S. Department of Commerce, National Institute of Standards and Technology: Washington, DC, USA; Cambridge University Press: Cambridge, UK, 2010; With 1 CD-ROM (Windows, Macintosh and UNIX). MR 2723248 (2012a:33001).
%
%
\bibitem{salam}W. A. AL-SALAM , 
\emph{ON THE PRODUCT OF TWO LEGENDRE POLYNOMIALS}, Mathematica Scandinavica, Vol. 4, No. 2 (1956), pp. 239-242 (4 pages)
%
\bibitem{samaddar}S. N. Samaddar, 
\emph{Some Integrals Involving Associated Legendre Functions}, MATHEMATICS OF COMPUTATION, VOLUME28, NUMBER 125, JANUARY, 1974
%
\bibitem{wong}B R Wong, 
\emph{On the overlap integral of associated Legendre polynomial}, J. Phys. A: Math. Gen. 31 (1998) 1101-1103. Printed in the UK 
%
\bibitem{bailey}Bailey, W. N. 
\emph{On the product of two associated Legendre functions}. The Quarterly Journal of Mathematics 1 (1940): 30-35.
%
%
\bibitem{bateman} Bateman, Harry (1954) 
\emph{Tables of Integral Transforms}  Vol. II. McGraw-Hill Book Company , New York. ISBN 07-019549-8
%
\bibitem{harrison} Michael Harrison, Patrick Waldron,
\emph{Mathematics for Economics and Finance}, Taylor \& Francis, 2011, USA.
%
\bibitem{diekema}Enno Diekema, Tom H. Koornwinder,
\emph{Generalizations of an integral for Legendre polynomials by Persson and Strang}, Journal of Mathematical Analysis and Applications, Volume 388, Issue 1,2012,Pages 125-135, ISSN 0022-247X, https://doi.org/10.1016/j.jmaa.2011.12.001.
%
\bibitem{askey}RICHARD ASKEY, TOM H. KOORNWINDER, MIZAN RAHMAN,
\emph{ AN INTEGRAL OF PRODUCTS OF ULTRASPHERICAL FUNCTIONS AND A q-EXTENSION} J. London Math. Soc. (2) 33 (1986) 133-148
%
\end{thebibliography}
\end{document}